\documentclass[12pt, reqno]{amsart}


\author[P.~Leonetti]{Paolo Leonetti}
\address{Department of Economics, Universit\`a degli Studi dell'Insubria, via Monte Generoso 71, 21100 Varese, Italy}
\email{leonetti.paolo@gmail.com}

\keywords{Maldistributed sequence; 
lower semicontinuous submeasure; 
analytic P-ideals; 
meager sets; 
meager ideals
}

\subjclass[2010]{
Primary: 11B05, 54A20. Secondary: 40A35, 54E52. 
}

\title{On maldistributed sequences and meager ideals}

\usepackage[T1]{fontenc}
\usepackage{amsmath}
\usepackage{amssymb}
\usepackage{amsthm}
\usepackage[left=3.5cm, right=3.5cm]{geometry} 
\usepackage{hyperref}
\usepackage{fancyhdr}
\usepackage{enumitem}
\usepackage{comment}
\usepackage{nicefrac}
\usepackage{comment}
\usepackage{bm}
\usepackage{mathrsfs}
\usepackage{graphicx}
\usepackage[utf8]{inputenc}
\usepackage{cancel}
\usepackage{mathtools}

\usepackage{tikz}
\usetikzlibrary{decorations.pathreplacing,matrix,arrows,positioning,automata,shapes,shadows,calc,fadings,decorations,snakes,through,intersections}

\AtBeginDocument{%
   \def\MR#1{}
}

\newtheorem{thm}{Theorem}[section]

\newtheorem{prop}[thm]{Proposition}

\theoremstyle{definition} 
\let\olddefi\defi
\renewcommand{\defi}{\olddefi\normalfont}
\let\oldquestion\question
\renewcommand{\question}{\oldquestion\normalfont}
\newtheorem{example}[thm]{Example}
\let\oldexample\example
\renewcommand{\example}{\oldexample\normalfont}
\newtheorem{rmk}[thm]{Remark}
\let\oldrmk\rmk
\renewcommand{\rmk}{\oldrmk\normalfont}

\pagestyle{fancy}
\fancyhf{}
\fancyhead[CO]{\textsc{On maldistributed sequences and meager ideals}}
\fancyhead[CE]
{
\textsc{Paolo Leonetti}}
\fancyhead[RO,LE]{\thepage}

\setlength{\headheight}{12pt}


\hypersetup{
    pdftitle={},
    pdfauthor={},
    pdfmenubar=false,
    pdffitwindow=true,
    pdfstartview=FitH,
    colorlinks=true,
    linkcolor=blue,
    citecolor=green,
    urlcolor=cyan
}

\uchyph=0

\providecommand{\MR}[1]{}

\providecommand{\MR}{\relax\ifhmode\unskip\space\fi MR }

\begin{document}

\maketitle
\thispagestyle{empty}

\begin{abstract} 
We show that an ideal $\mathcal{I}$ on $\omega$ is meager if and only if 
the set of sequences $(x_n)$ taking values in a 
Polish space $X$ 
for which all elements of $X$ are $\mathcal{I}$-cluster points of $(x_n)$ is comeager. 
The latter condition is also known as $\nu$-maldistribution, where $\nu: \mathcal{P}(\omega)\to \mathbb{R}$ is the $\{0,1\}$-valued submeasure defined by $\nu(A)=1$ if and only if $A\notin \mathcal{I}$.  
It turns out that the meagerness of $\mathcal{I}$ is also equivalent to 
a technical condition given by Mi{\v s}{\'i}k and T{\'o}th in [J.~Math.~Anal.~Appl.~\textbf{541} (2025), 128667]. 
Lastly, we show that the analogue of the first part holds replacing $\nu$ with $\|\cdot\|_\varphi$, where $\varphi$ is a lower semicontinuous submeasure. 
\end{abstract}


\section{Introduction}\label{sec:intro}

Let $\omega$ stands for the set of nonnegative integers. We say that a map $\nu: \mathcal{P}(\omega) \to \mathbb{R}$ is a \emph{diffuse capacity} if it is a monotone map (that is, $\nu(A)\le \nu(B)$ for all $A\subseteq B\subseteq \omega$) such that $\nu(F)=0$ and $\nu(\omega\setminus F)=1$ for all finite $F\subseteq \omega$. If, in addition, $\nu$ is subadditive (that is, $\nu(A\cup B)\le \nu(A)+\nu(B)$ for all $A,B\subseteq \omega$) then we call it \emph{diffuse submeasure}. 
Given a topological space $X$, a sequence $\bm{x}=(x_n: n \in \omega) \in X^\omega$ is called $\nu$\emph{-maldistributed} if 
$$
\nu(\{n \in \omega: x_n \in U\})=1
$$
for all nonempty open sets $U\subseteq X$, cf. \cite[Definition 3.1]{MR4781067} for the case of separable metric spaces. 
Moreover, define the set
$$
\Sigma_\nu(X):=\{\bm{x} \in X^\omega: \bm{x} \text{ is }\nu\text{-maldistributed}\,\}
$$
and endow $X^\omega$ with the product topology. 
It is worth noting, as a particular case, that if $\nu$ is the diffuse submeasure defined by $\nu(S):=0$ if $S\subseteq \omega$ is finite and $\nu(S):=1$ otherwise then a continuous map $T: X\to X$ is commonly known as hypercyclic if and only if there exists $x_0 \in X$ such that its orbit $(T^nx_0: n \in \omega)$ is $\nu$-maldistributed, cf. \cite{MR3552249, LeoDynamical} and references therein.  

Very recently, Mi{\v s}{\'i}k and T{\'o}th proved a sufficient technical condition to guarantee that, from a topological viewpoint, most sequences with values in $X$ are $\nu$-maldistributed, namely, the complement of $\Sigma_\nu(X)$ is meager (hence, contained in a countable union of closed sets with empty interior). With the above premises, their main result \cite[Theorem 3.1]{MR4781067} can be formulated as follows: 
\begin{thm}\label{thm:misiktothmain}
    Let $X$ be a separable metric space and suppose that $\nu: \mathcal{P}(\omega) \to \mathbb{R}$ is a diffuse capacity which satisfies the condition\textup{:}
    \begin{equation}\label{eq:misikcondition}
    \begin{split}
\forall \alpha \in (0,1), 
&\,\exists g_\alpha \in \omega^\omega, 
\,\forall A\subseteq \omega: \\
&\hspace{-10mm}\nu(\omega\setminus A)\le 1-\alpha \implies 
\exists n_{\alpha,A} \in \omega, \forall n \ge n_{\alpha,A}: A\cap [n,n+g_\alpha(n)] \neq \emptyset.
\end{split}
    \end{equation}
Then 
$\Sigma_\nu(X)$ 
is comeager. 
\end{thm}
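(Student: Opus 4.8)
The plan is to show that, under condition~\eqref{eq:misikcondition}, the complement of $\Sigma_\nu(X)$ is contained in a countable union of closed nowhere dense sets. I would first fix a countable base $\{U_k: k \in \omega\}$ of nonempty open sets for the separable metric space $X$, and observe that $\bm{x} \notin \Sigma_\nu(X)$ precisely when there is some $k$ with $\nu(\{n: x_n \in U_k\}) < 1$; since $\nu$ is a capacity and $\{n: x_n \in U_k\}$ records a membership condition, the natural reformulation is that the complement decomposes as
\begin{equation}\label{eq:decomp-sketch}
X^\omega \setminus \Sigma_\nu(X) = \bigcup_{k \in \omega}\,\bigcup_{j \in \omega} F_{k,j},
\end{equation}
where $F_{k,j}$ gathers those $\bm{x}$ for which the ``visit set'' to $U_k$ has $\nu$-value bounded away from $1$ by a rational margin $\alpha_j$. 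The goal is then to prove each $F_{k,j}$ is closed and has empty interior.

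Next I would use condition~\eqref{eq:misikcondition} to extract the combinatorial content. Fixing $\alpha \in (0,1)$ and a nonempty open $U_k$, the condition supplies a gap-control function $g_\alpha \in \omega^\omega$ with the property that any set $A$ whose complement has $\nu$-value at most $1-\alpha$ must eventually meet every window $[n, n+g_\alpha(n)]$. Contrapositively, if the visit set $A = \{n: x_n \in U_k\}$ fails to meet infinitely many such windows, then $\nu(\omega \setminus A) > 1-\alpha$, i.e.\ the sequence is ``bad'' for $U_k$ at level $\alpha$. This is the crux: condition~\eqref{eq:misikcondition} converts the abstract smallness measured by $\nu$ into a concrete, finitary statement about gaps between indices $n$ with $x_n \in U_k$, and finitary conditions of this shape are exactly what make the relevant sets closed in the product topology.

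With this translation in hand, I would argue the two topological properties. For \emph{closedness} of the relevant pieces, note that the condition ``$\bm{x}$ misses the window $[n, n+g_\alpha(n)]$ for $U_k$'' depends on only finitely many coordinates $x_n, \dots, x_{n+g_\alpha(n)}$ and is a closed condition (since $\{y: y \notin U_k\}$ is closed); an appropriate intersection/limit over $n$ therefore produces a closed set. For \emph{empty interior}, I would take any basic open cylinder in $X^\omega$ — which constrains only finitely many coordinates — and show it contains a point of $\Sigma_\nu(X)$: because only finitely many coordinates are fixed, I am free to set the remaining coordinates so that every $U_k$ is visited with suitable frequency, for instance by a round-robin enumeration that places visits to each $U_k$ densely enough that the gap windows $[n, n+g_\alpha(n)]$ are all eventually met, forcing $\nu$-value $1$ there. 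This density-of-maldistributed-sequences argument is what defeats the interior.

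The main obstacle I anticipate is the empty-interior step, and specifically making the round-robin construction interact correctly with the gap function $g_\alpha$. Since $g_\alpha$ may grow, I must build a single sequence whose tail hits every $U_k$ inside the prescribed windows \emph{simultaneously} for all $k$ and all relevant levels $\alpha$; the care lies in interleaving the (countably many) base sets so that for each fixed $U_k$ the visits remain frequent enough in the sense controlled by $g_\alpha$, while only finitely many initial coordinates are prescribed by the cylinder. Monotonicity of $\nu$ and the fact that $\nu(\omega \setminus F) = 1$ for finite $F$ should let me ignore the prescribed prefix, reducing the problem to controlling the tail, after which a diagonal enumeration yields a genuinely $\nu$-maldistributed sequence inside the chosen cylinder and closes the argument.
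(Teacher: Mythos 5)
Your global strategy --- covering $X^\omega\setminus\Sigma_\nu(X)$ by countably many pieces indexed by a countable base $\{U_k\}$ and rational margins $\alpha_j$, and using the window condition to trap each piece inside a countable union of closed nowhere dense sets --- is the right one (it is essentially the route of the cited Mi{\v s}{\'i}k--T{\'o}th proof; the present paper does not reprove Theorem \ref{thm:misiktothmain} directly, but derives the relevant implication for $\nu=\bm{1}_{\mathcal{I}^+}$ through meagerness of $\mathcal{I}$ and a Banach--Mazur game). However, there is a genuine error in how you apply condition \eqref{eq:misikcondition}. You plug the visit set $A_k(\bm{x}):=\{n: x_n\in U_k\}$ into the condition and read the contrapositive conclusion $\nu(\omega\setminus A_k(\bm{x}))>1-\alpha$ as ``$\bm{x}$ is bad for $U_k$''. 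That is not badness: badness means $\nu(A_k(\bm{x}))<1$, and for a mere monotone capacity the value $\nu(\omega\setminus A_k(\bm{x}))$ carries no information about $\nu(A_k(\bm{x}))$; indeed, if $U_{k'}$ is a basic set disjoint from $U_k$, every maldistributed $\bm{x}$ satisfies $\nu(\omega\setminus A_k(\bm{x}))\ge\nu(A_{k'}(\bm{x}))=1$ by monotonicity. The condition must instead be applied to the complement $\omega\setminus A_k(\bm{x})$: if $\nu(A_k(\bm{x}))\le 1-\alpha_j$, then $\omega\setminus A_k(\bm{x})$ eventually meets every window, so your set $F_{k,j}$ is contained in $\bigcup_m C_{k,j,m}$ with
$$
C_{k,j,m}:=\{\bm{x}\in X^\omega:\ \forall n\ge m\ \ \exists i\in[n,n+g_{\alpha_j}(n)]\ \text{ with } x_i\notin U_k\}.
$$
These are the closed sets one needs. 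Your closed condition ``every coordinate in the window lies outside $U_k$'' comes from the swapped application, and intersecting it over $n$ yields only the sequences that eventually avoid $U_k$ altogether --- far too small to cover $F_{k,j}$.

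Two further points. First, $F_{k,j}$ itself is in general not closed, so the stated goal ``prove each $F_{k,j}$ is closed'' cannot be met: for $\nu=\bm{1}_{\mathcal{I}^+}$ with $\mathcal{I}$ the ideal of density-zero sets, $X=\{0,1\}$ and $U_k=\{1\}$, the set $\{\bm{x}: \nu(A_k(\bm{x}))=0\}$ contains all finitely supported sequences, hence is dense and proper, hence not closed; one must work with the $F_\sigma$ supersets $\bigcup_m C_{k,j,m}$. Second, once the correct sets $C_{k,j,m}$ are in place, nowhere density is immediate --- given a cylinder with support in $[0,N]$, pick $n>\max(m,N)$ and set $x_i\in U_k$ for all $i\in[n,n+g_{\alpha_j}(n)]$ --- and no simultaneous round-robin over all $k$ and $j$ is needed, since the Baire category theorem handles the countable conjunction. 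By contrast, the argument you propose for the empty-interior step (every cylinder contains a maldistributed point) only shows that $\Sigma_\nu(X)$ is dense, which does not imply comeagerness and cannot substitute for nowhere density of correctly chosen closed supersets.
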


Results on the same spirit of Theorem \ref{thm:misiktothmain} can be found also in \cite
{MR4600193, MR4566746, MR4506089, MR2075041}. 
In the same work, Mi{\v s}{\'i}k and T{\'o}th asked whether the converse of Theorem \ref{thm:misiktothmain} holds, namely, whether there exists a separable metric space $X$ and a diffuse capacity $\nu$ such that $\Sigma_\nu(X)$ is comeager, while condition \eqref{eq:misikcondition} does \emph{not} hold, see \cite[Open Problem 5.1]{MR4781067}. 
Our aim is to answer it in the negative for a certain family of diffuse submeasures.

To this aim, recall that an ideal $\mathcal{I}\subseteq \mathcal{P}(\omega)$ is a family of subsets stable under finite unions and subsets. Moreover, it is assumed that the family of finite sets $\mathrm{Fin}$ is contained in $\mathcal{I}$, and that $\omega\notin \mathcal{I}$. 
The dual filter of an ideal $\mathcal{I}$ is denoted by $\mathcal{I}^\star:=\{S\subseteq \omega: \omega\setminus A\in \mathcal{I}\}$. Identifying $\mathcal{P}(\omega)$ with the Cantor space $2^\omega$, we can speak about the topological complexity of ideals (in particular, it makes sense to speak about meager ideals). Lastly, given a sequence $\bm{x}$ taking values in a topological space $X$, we say that $\eta \in X$ is an $\mathcal{I}$-cluster point of the sequence $\bm{x}$ if $\{n \in \omega: x_n \in U\} \in \mathcal{I}^+$ for all open neighborhoods $U$ of $\eta$, where $\mathcal{I}^+:=\mathcal{P}(\omega)\setminus \mathcal{I}$ stands for the family of $\mathcal{I}$-positive sets. 
We refer to \cite{MR3920799} for basic facts and characterizations of the set of $\mathcal{I}$-cluster points, which is denoted by $\Gamma_{\bm{x}}(\mathcal{I})$.


Our main result shows that condition \eqref{eq:misikcondition} for the submeasure $\nu:=\bm{1}_{\mathcal{I}^+}$ 
is equivalent to meagerness of the ideal $\mathcal{I}$, and also to the comeagerness of $\Sigma_\nu(X)$; this goes in the spirit of the characterizations of meagerness of $\mathcal{I}$ given in \cite{MR4566746}. 
\begin{thm}\label{thm:answeropenproblem}
Let $X$ be a Polish space with $|X|\ge 2$, let $\mathcal{I}$ be an ideal on $\omega$, and define the diffuse submeasure $\nu:=\bm{1}_{\mathcal{I}^+}$. Then the following are equivalent\textup{:}
\begin{enumerate}[label={\rm (\roman{*})}]
\item \label{item:1mainmeager} $\nu$ satisfies condition \eqref{eq:misikcondition}\textup{;}
\item \label{item:2mainmeager} $\mathcal{I}$ is meager\textup{;}
\item \label{item:3mainmeager} $\Sigma_\nu(X)$ is comeager\textup{.}
\end{enumerate}
\end{thm}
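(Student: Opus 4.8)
The plan is to prove (i)$\Leftrightarrow$(ii) together with the loop (ii)$\Rightarrow$(iii)$\Rightarrow$(ii), the workhorse throughout being the classical Jalali--Naini--Talagrand characterization of meager ideals (cf. \cite{MR4566746}): $\mathcal{I}$ is meager if and only if there is a partition of $\omega$ into nonempty finite intervals $(I_k)_{k\in\omega}$ such that $\bigcup_{k\in K}I_k\in\mathcal{I}^+$ for every infinite $K\subseteq\omega$ (equivalently, every $A\in\mathcal{I}$ contains only finitely many blocks $I_k$). A preliminary simplification is in order: because $\nu=\bm{1}_{\mathcal{I}^+}$ takes only the values $0$ and $1$, the premise $\nu(\omega\setminus A)\le 1-\alpha$ in \eqref{eq:misikcondition} holds, for every $\alpha\in(0,1)$, precisely when $\omega\setminus A\in\mathcal{I}$, i.e.\ $A\in\mathcal{I}^\star$. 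Hence the quantifier on $\alpha$ is vacuous, and \ref{item:1mainmeager} is equivalent to the single statement that there exists $g\in\omega^\omega$ such that every $A\in\mathcal{I}^\star$ satisfies $A\cap[n,n+g(n)]\neq\emptyset$ for all large $n$.

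The equivalence (i)$\Leftrightarrow$(ii) is then a direct translation. From such a $g$, which may be taken nondecreasing (enlarging $g$ only lengthens the windows and preserves the condition), I would set $b_0:=0$, $b_{k+1}:=b_k+g(b_k)+1$ and $I_k:=[b_k,b_{k+1})$; evaluating the window condition at the points $b_k$ shows that every $A\in\mathcal{I}^\star$ meets all but finitely many $I_k$, which is exactly a Talagrand partition, so $\mathcal{I}$ is meager. Conversely, from a Talagrand partition $(J_k)$ of consecutive intervals one defines $g$ to be large enough on $J_k$ that $[n,n+g(n)]\supseteq J_{k+1}$ whenever $n\in J_k$; then any $A\in\mathcal{I}^\star$, which meets cofinitely many $J_{k+1}$, satisfies the window condition as $n\to\infty$.

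For (ii)$\Rightarrow$(iii) I would fix a countable base $(U_j)$ of nonempty open subsets of $X$ and observe that $\Sigma_\nu(X)=\bigcap_j E_{U_j}$, where $E_U:=\{\bm{x}\in X^\omega:\{n:x_n\in U\}\in\mathcal{I}^+\}$; since a countable intersection of comeager sets is comeager, it suffices to show each $E_{U_j}$ is comeager. Fixing a Talagrand partition $(I_k)$ as in (ii), the sets $G_m:=\bigcup_{k\ge m}\{\bm{x}: x_n\in U_j\ \text{for all}\ n\in I_k\}$ are open, and dense because any basic open set constrains only finitely many coordinates, leaving some far block $I_k$ free. Every $\bm{x}\in\bigcap_m G_m$ has $\{n:x_n\in U_j\}$ containing infinitely many blocks, hence lying in $\mathcal{I}^+$, so $E_{U_j}\supseteq\bigcap_m G_m$ is comeager.

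The crux is (iii)$\Rightarrow$(ii), which I would prove contrapositively: assuming $\mathcal{I}$ non-meager, I show $\Sigma_\nu(X)$ is not comeager. Fix disjoint nonempty open $U,V\subseteq X$ (available since $|X|\ge 2$ and $X$ is Hausdorff); as $\Sigma_\nu(X)\subseteq E_U$, it is enough to prove $E_U$ is not comeager. Suppose instead $\bigcap_m W_m\subseteq E_U$ with $(W_m)$ dense open. The key observation is that if one builds conditions whose domains are always initial segments $[0,a)$, then meeting a dense $W_m$ can only constrain coordinates $\ge a$; in particular every coordinate that a refinement drops into $U$ lies beyond the current frontier. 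I would then construct an interval partition $(I_k)$ and, for each infinite $K\subseteq\omega$, a point $\bm{x}^K$ that performs the $W_m$-step inside the $m$-th block indexed by $K$ and is filled with $V$ on all remaining blocks, so that $\bm{x}^K\in\bigcap_m W_m$ while $\{n:x^K_n\in U\}\subseteq\bigcup_{k\in K}I_k$. The one real obstacle is the attendant circularity in the length control: the block $I_k$ must be long enough to absorb a density step from the condition reaching $\min I_k$, and that condition depends on $K\cap[0,k)$. The way out is that only finitely many patterns $K\cap[0,k)\in 2^{k}$ are possible, so the required length of $I_k$ can be fixed in advance by maximizing the reach over these finitely many incoming conditions. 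Granting the construction, non-meagerness of $\mathcal{I}$ applied to $(I_k)$ supplies an infinite $K$ with $\bigcup_{k\in K}I_k\in\mathcal{I}$; then $\bm{x}^K\in E_U$ forces $\{n:x^K_n\in U\}\in\mathcal{I}^+$, contradicting $\{n:x^K_n\in U\}\subseteq\bigcup_{k\in K}I_k\in\mathcal{I}$. I expect this fusion, and specifically the bound on block lengths, to be the main technical point of the whole argument.
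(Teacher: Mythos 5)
Your proposal is correct in outline and shares the paper's overall skeleton: everything is routed through Talagrand's interval characterization of meager ideals, and your treatment of \ref{item:1mainmeager}$\Leftrightarrow$\ref{item:2mainmeager}, including the observation that the $\alpha$-quantifier in \eqref{eq:misikcondition} is vacuous for a $\{0,1\}$-valued $\nu$, is essentially identical to the paper's. The differences lie in the topological directions. For \ref{item:2mainmeager}$\Rightarrow$\ref{item:3mainmeager} you exhibit the dense open sets $G_m$ explicitly over a countable base, whereas the paper runs a Banach--Mazur game for each point $\eta$ of a countable dense subset and uses closedness of $\Gamma_{\bm{x}}(\mathcal{I})$; these are interchangeable, and yours is marginally more elementary. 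The genuine divergence is \ref{item:3mainmeager}$\Rightarrow$\ref{item:2mainmeager}: the paper isolates this as Proposition \ref{prop:converse} and proves it by building a winning strategy for Player II in Laflamme's filter game, then invoking Laflamme's theorem that such a strategy exists iff $\mathcal{I}$ is meager; you instead give a direct contrapositive fusion over all $2^{k}$ patterns $K\cap[0,k)$, which in effect inlines the game-theoretic step. Your resolution of the circularity (fix the length of $I_k$ by maximizing the reach of the density step over the finitely many possible incoming conditions) is the correct and standard device, so the argument does go through; the paper's route buys a shorter proof by outsourcing exactly this fusion to Laflamme's theorem, while yours is self-contained modulo Talagrand.

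Two points in your sketch need more care than you give them. First, the claim that meeting a dense open $W_m$ from a condition with domain $[0,a)$ ``can only constrain coordinates $\ge a$'' is false in the product topology: a basic open subset of $q\cap W_m$ may also shrink the constraints on coordinates below $a$. This is harmless for your purposes, but only because the coordinates outside $\bigcup_{k\in K}I_k$ are pinned inside $V$ from the moment they enter the domain, shrinking preserves that, and $\overline{V}\cap U=\emptyset$ for disjoint open sets $U,V$; the justification should be stated this way. Second, precisely because refinements may shrink old coordinates infinitely often along an infinite $K$, the existence of the limit point $\bm{x}^K\in\bigcap_m W_m$ is not automatic from nestedness of open conditions: one needs the usual completeness bookkeeping (nested closures with coordinatewise diameters tending to $0$), which is where the Polish hypothesis actually enters your argument and should be made explicit.
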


It is worth noting, as it 
follows by \cite[Proposition 2.11 and Theorem 6.2]{FKL}, that, if $\mathcal{I}$ is an ideal on $\omega$ and $X$ is an infinite separable metric space, then the existence of a $\bm{1}_{\mathcal{I}^+}$-maldistributed sequence is equivalent to the fact that for every $n \in \omega$ there exists a sequence $\bm{x} \in X^\omega$ 
with at least $n$ $\mathcal{I}$-cluster points, which happens if and only if $\mathcal{I}$ is not a Fubini sum of finitely many maximal ideals. 


Lastly, we provide a really large class of diffuse submeasures which satisfy condition \eqref{eq:misikcondition}. 
To this aim, a monotone subadditive map $\varphi: \mathcal{P}(\omega)\to [0,\infty]$ is said to be a \emph{lower semicontinuous submeasure} (in short, lscsm) if it satisfies $\varphi(F)<\infty$ for all finite $F \subseteq \omega$ and, in addition, 
$$
\forall A\subseteq \omega, \quad \varphi(A)=\sup\{\varphi(A\cap [0,n]): n \in \omega\}.
$$
Notice that the above property is precisely the lower semicontinuity of the submeasure $\varphi$, regarding its domain $\mathcal{P}(\omega)$ as the Cantor space $2^\omega$, that is, if $A_n \to A$ then $\liminf_n \varphi(A_n) \ge \varphi(A)$. Examples of lscsms include $\varphi(A)=|A|$ or $\varphi(A)=\sum_{n \in A}1/(n+1)$ or $\varphi(A)=\sup_{n\ge 1} |A\cap [0,n]|/n$, cf. also \cite[Chapter 1]{MR1711328}. 

Given a lscsm $\varphi: \mathcal{P}(\omega)\to [0,\infty]$, define the family
$$
\mathrm{Exh}(\varphi):=\{S\subseteq \omega: \|S\|_\varphi=0\},
\,\,\,\, \text{ where }\,\,\|S\|_\varphi:=\lim_{n\to \infty} \varphi(S\setminus [0,n]).
$$
Informally, $\|S\|_\varphi$ stands for the $\varphi$-mass at infinity of the set $S$. 
A classical result of Solecki \cite[Theorem 3.1]{MR1708146} states that an ideal $\mathcal{I}$ on $\omega$ is an analytic $P$-ideal if and only if there exists a lscsm $\varphi$ such that 
$$
\mathcal{I}=\mathrm{Exh}(\varphi)
\quad \text{ and }\quad 
\varphi(\omega)<\infty. 
$$
Here, $\mathcal{I}$ is said to be a $P$-ideal if for every sequence $(A_n)$ with values in $\mathcal{I}$ there exists $A \in \mathcal{I}$ such that $A_n\setminus A$ is finite for all $n \in \omega$. 

We remark that the family of analytic $P$-ideals is large and includes, among others, all Erd{\H o}s--Ulam ideals introduced by Just and Krawczyk in \cite{MR748847}, ideals generated by nonnegative regular matrices \cite{Filipow18, MR4041540}, 
the Fubini products $\emptyset \times \mathrm{Fin}$, which can be defined as 
$
\{A\subseteq \omega: \forall n \in \omega, A \cap I_n \in \mathrm{Fin}\},
$ 
where $(I_n)$ is a given partition of $\omega$ into infinite sets,  
certain ideals used by Louveau and Veli\v{c}kovi\'{c} \cite{Louveau1994}, 
and, more generally, density-like ideals and generalized density ideals \cite{MR3436368, MR4404626}. Additional pathological examples can be found in \cite{MR0593624}. 
It has been suggested in \cite{MR4124855, MR3436368} that the theory of analytic $P$-ideals may have some relevant yet unexploited potential for the study of the geometry of Banach spaces. 

\begin{prop}\label{prop:lscsm}
Let $X$ be a separable metric space 
and $\varphi: \mathcal{P}(\omega)\to [0,\infty]$ be a lscsm such that $\|\omega\|_\varphi=1$. 
Then $\|\cdot\|_\varphi$ is a diffuse submeasure which satisfies condition \eqref{eq:misikcondition}. Hence, $\Sigma_{\|\cdot\|_\varphi}(X)$ is comeager.  
\end{prop}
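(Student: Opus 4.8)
The plan is to set $\nu:=\|\cdot\|_\varphi$ and verify the two substantive claims---that $\nu$ is a diffuse submeasure and that it satisfies \eqref{eq:misikcondition}---after which the comeagerness of $\Sigma_\nu(X)$ is an immediate application of Theorem \ref{thm:misiktothmain}. For the first claim I would argue as follows. Since $S\setminus[0,n]$ decreases in $n$ and $\varphi$ is monotone, the sequence $\varphi(S\setminus[0,n])$ is nonincreasing, so the limit defining $\|S\|_\varphi$ exists and $\nu$ is well defined and monotone; subadditivity is obtained by letting $n\to\infty$ in $\varphi((A\cup B)\setminus[0,n])\le\varphi(A\setminus[0,n])+\varphi(B\setminus[0,n])$. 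For finite $F$ one has $F\setminus[0,n]=\emptyset$ for large $n$, giving $\|F\|_\varphi=0$, while for $n\ge\max F$ the tails $(\omega\setminus F)\setminus[0,n]$ and $\omega\setminus[0,n]$ agree, so $\|\omega\setminus F\|_\varphi=\|\omega\|_\varphi=1$ by hypothesis. This part is routine.

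The heart of the matter is condition \eqref{eq:misikcondition}, and the key device is that lower semicontinuity turns the $\varphi$-mass of a tail into a supremum over finite intervals:
\[
\varphi(\omega\setminus[0,k])=\sup_{m>k}\varphi([k+1,m])\qquad(k\in\omega).
\]
Fix $\alpha\in(0,1)$. Because $\|\omega\|_\varphi=1$ and $n\mapsto\varphi(\omega\setminus[0,n])$ is nonincreasing with limit $1$, each term satisfies $\varphi(\omega\setminus[0,k])\ge 1$; hence for every $k$ I may choose $h(k)>k$ with $\varphi([k+1,h(k)])>1-\alpha/2$, and I set $g_\alpha(k):=h(k)-k$ so that $[k+1,h(k)]\subseteq[k,k+g_\alpha(k)]$.

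It then remains to check that this $g_\alpha$ works, which I would do contrapositively. If $A\subseteq\omega$ misses the interval $[n,n+g_\alpha(n)]$ for arbitrarily large $n$, then for each such $n$ we have $[n+1,h(n)]\subseteq\omega\setminus A$; given any $m$ and picking such an $n>m$ gives $[n+1,h(n)]\subseteq(\omega\setminus A)\setminus[0,m]$, whence $\varphi((\omega\setminus A)\setminus[0,m])>1-\alpha/2$ by monotonicity, and letting $m\to\infty$ yields $\|\omega\setminus A\|_\varphi\ge 1-\alpha/2>1-\alpha$. Thus every $A$ with $\|\omega\setminus A\|_\varphi\le 1-\alpha$ meets $[n,n+g_\alpha(n)]$ for all large $n$, which is exactly \eqref{eq:misikcondition}. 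I expect the main obstacle to be precisely the loss of strict inequality in this last limit: using the threshold $1-\alpha$ would only give $\|\omega\setminus A\|_\varphi\ge 1-\alpha$, which fails to contradict $\le 1-\alpha$, so the slack created by the strictly larger bound $1-\alpha/2$ is what makes the contrapositive go through. With $\nu$ shown to be a diffuse submeasure---in particular a diffuse capacity---satisfying \eqref{eq:misikcondition}, Theorem \ref{thm:misiktothmain} applies on the separable metric space $X$ and delivers the comeagerness of $\Sigma_\nu(X)$.
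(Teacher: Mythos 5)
Your proof is correct and follows essentially the same route as the paper: both define $g_\alpha(n)$ so that the interval $[n,n+g_\alpha(n)]$ carries $\varphi$-mass strictly above $1-\alpha$ (possible because lower semicontinuity together with $\|\omega\|_\varphi=1$ forces $\varphi([n,\infty))\ge 1$ for every $n$), and then invoke Theorem \ref{thm:misiktothmain}. The only cosmetic difference is the finishing step: the paper uses subadditivity to bound $\varphi(A\cap[n,n+g_\alpha(n)])\ge \alpha/4$ directly, whereas you argue contrapositively that missing infinitely many such intervals forces $\|\omega\setminus A\|_\varphi\ge 1-\alpha/2>1-\alpha$; both are valid, and your explicit remark about why the strict slack $1-\alpha/2$ (rather than $1-\alpha$) is needed is exactly the right point of care.
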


The above simple result provides a generalization of \cite[Proposition 4.1]{MR4781067}, which corresponds to the case of lscsm $\varphi$ generating a density ideal as in \cite[Section 1.13]{MR1711328}, which in turn extends the main results in \cite{MR4183195}. 

To conclude, one might be tempted to conjecture that, at least in the case of the submeasures $\bm{1}_{\mathcal{I}^+}$, where $\mathcal{I}$ is an ideal on $\omega$, the set $\Sigma_\nu(X)$ is either meager or comeager. 
For instance, if $X$ is a compact metric space with $|X|\ge 2$ and $\mathcal{I}$ is a maximal ideal on $\omega$ (that is, the complement of a free ultrafilter), then every sequence with values in $X$ would be $\mathcal{I}$-convergent, so that $\Sigma_\nu(X)=\emptyset$. 
However, 
the following example shows that this is not the case. 
\begin{example}\label{example:nomeagernocomeager}
    Endow $X:=\{0,1\}$ with the discrete topology, let $\mathcal{I}_0, \mathcal{I}_1$ be two maximal ideals on $2\omega$ and $2\omega+1$, respectively, and define 
    $$
    \mathcal{I}:=\{S\subseteq \omega: S\cap 2\omega \in \mathcal{I}_0 \text{ and } S\cap (2\omega+1) \in \mathcal{I}_1\}.
    $$
    Then $\Sigma_{\bm{1}_{\mathcal{I}^+}}(X)$ is neither meager nor comeager. In fact, for each $i,j \in X$ define 
    $$
    \mathcal{S}_{i,j}:=\{\bm{x} \in X^\omega: 
    \mathcal{I}_0\text{-}\lim \bm{x}\upharpoonright 2\omega=i \text{ and }
    \mathcal{I}_1\text{-}\lim \bm{x}\upharpoonright (2\omega+1)=j\}.
    $$
    Regarding $X$ as the Abelian group $\mathbb{Z}/2\mathbb{Z}$, it is easy to see that the above sets $\mathcal{S}_{i,j}$ are homeomorphic. Since $X^\omega$ is Polish and $\Sigma_{\bm{1}_{\mathcal{I}^+}}(X)=\mathcal{S}_{0,1} \cup \mathcal{S}_{1,0}$, we conclude that $\Sigma_{\bm{1}_{\mathcal{I}^+}}(X)$ is neither meager nor comeager. 
\end{example}

The proofs of our results are given in Section \ref{sec:proofs}. 


\section{Proofs} \label{sec:proofs}

Before the proofs of our main characterization, we start 
with the following intermediate result, cf. \cite[Theorem 3.1]{MR4566746}. This applies, in particular, to complete metric spaces $X$ with $|X|\ge 2$.
\begin{prop}\label{prop:converse}
    Let $X$ be a Hausdorff space with $|X|\ge 2$ and assume that $X^\omega$ is Baire. Suppose also that there exists $\eta \in X$ such that 
    \begin{equation}\label{eq:definitionSeta}
    \mathcal{S}_\eta:=\{\bm{x} \in X^\omega: \eta \in \Gamma_{\bm{x}}(\mathcal{I})\}
    \end{equation}
    is comeager. Then $\mathcal{I}$ is meager.
\end{prop}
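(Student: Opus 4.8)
The plan is to prove the contrapositive: assuming $\mathcal{I}$ is \emph{not} meager, I will show that $\mathcal{S}_\eta$ cannot be comeager. First I would fix a point $\zeta\neq\eta$ (possible since $|X|\ge 2$) and, using that $X$ is Hausdorff, disjoint open sets $U\ni\eta$ and $V\ni\zeta$. Then $\zeta\notin\overline{U}$, so $W:=X\setminus\overline{U}$ is a nonempty open set disjoint from $U$, and $U\cup W=X\setminus\partial U$ is dense open, since the boundary $\partial U$ is closed with empty interior. Consequently $Y:=(U\cup W)^\omega$ is a dense $G_\delta$, hence comeager in the Baire space $X^\omega$. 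The decisive feature of $Y$ is that, restricted to it, the otherwise discontinuous ``membership pattern'' map
$$
\pi\colon X^\omega\to 2^\omega,\qquad \pi(\bm{x}):=\{n\in\omega: x_n\in U\},
$$
becomes a \emph{continuous open surjection} $\pi\upharpoonright Y\colon Y\to 2^\omega$: on $U\cup W$ the indicator of $U$ is locally constant (as $U$ and $W$ are relatively clopen), so each coordinate of $\pi\upharpoonright Y$ is continuous, and images of basic open boxes are basic clopen subsets of $2^\omega$.

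Next I would record the reduction. If $\bm{x}\in\mathcal{S}_\eta$, then, taking $U$ as a neighborhood of $\eta$, the set $\{n: x_n\in U\}$ lies in $\mathcal{I}^+$; hence $\mathcal{S}_\eta$ is disjoint from $\pi^{-1}(\mathcal{I})=\{\bm{x}:\{n:x_n\in U\}\in\mathcal{I}\}$. Therefore, were $\mathcal{S}_\eta$ comeager, $\pi^{-1}(\mathcal{I})$ would be meager, so it suffices to prove that $\pi^{-1}(\mathcal{I})$ is \emph{non}-meager whenever $\mathcal{I}$ is non-meager. Since $Y$ is comeager, meagerness in $Y$ and in $X^\omega$ coincide for subsets of $Y$, so it is enough to show that $(\pi\upharpoonright Y)^{-1}(\mathcal{I})$ is non-meager in $Y$.

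Here is where the structure of $\pi\upharpoonright Y$ is used, and where I expect the real work to lie. Each fiber $(\pi\upharpoonright Y)^{-1}(A)$ is a nonempty product of copies of $U$ and $W$, hence a nonempty Baire (indeed Polish, in the metric case) space, so it is non-meager in itself; moreover $(\pi\upharpoonright Y)^{-1}(\mathcal{I})$ meets each such fiber either in the whole fiber (when $A\in\mathcal{I}$) or not at all, so the set of $A$ whose fiber is fully hit is \emph{exactly} $\mathcal{I}$. Applying the Kuratowski--Ulam theorem for the open continuous surjection $\pi\upharpoonright Y$ in its fibered form --- whose unconditional direction states that if a subset of $Y$ is meager then the set of base points whose fiber it meets non-meagerly is meager in $2^\omega$ --- the hypothetical meagerness of $(\pi\upharpoonright Y)^{-1}(\mathcal{I})$ would force $\mathcal{I}$ to be meager, a contradiction; equivalently, one may run the Banach--Mazur game on $Y$ and pull a winning strategy back through the open surjection. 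The hard part will be precisely this category transfer: the naive argument ``push the meager witnesses forward by $\pi$'' fails because $\pi\upharpoonright Y$ has \emph{meager} fibers (forcing infinitely many coordinates into $U$ or $W$ is nowhere dense), so a genuine fiberwise Kuratowski--Ulam / Banach--Mazur argument is required rather than a bare open-mapping estimate. Combining the steps yields that $\pi^{-1}(\mathcal{I})$ is non-meager, contradicting the comeagerness of $\mathcal{S}_\eta$; hence $\mathcal{I}$ is meager.
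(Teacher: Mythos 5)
Your overall architecture is sound up to the point where the real content begins, and then it stops. The setup is fine: $W:=X\setminus\overline{U}$ is nonempty open, $U\cup W$ is dense open (the boundary of an open set has empty interior), $Y:=(U\cup W)^\omega$ is comeager, $\pi\upharpoonright Y$ is a continuous open surjection onto $2^\omega$, and the reduction ``$\mathcal{S}_\eta$ comeager $\Rightarrow$ $\pi^{-1}(\mathcal{I})$ meager, so it suffices to show $\pi^{-1}(\mathcal{I})$ meager $\Rightarrow$ $\mathcal{I}$ meager'' is correct. The problem is that the step you yourself flag as ``where the real work lies'' is exactly the theorem to be proved, and you discharge it by citing ``the Kuratowski--Ulam theorem for the open continuous surjection $\pi\upharpoonright Y$ in its fibered form.'' That is not an off-the-shelf result: the classical Kuratowski--Ulam theorem is for product projections, and $\pi\upharpoonright Y$ is not one (the fibers $\prod_n F_n^z$ with $F_n^z\in\{U,W\}$ vary with $z$ and are not locally trivialized). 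A fibered Kuratowski--Ulam statement for an open surjection requires an additional hypothesis --- essentially a countable family of open sets in $Y$ whose traces form a $\pi$-base on every fiber --- and you neither state nor verify it. With second countability of $X$ one can supply such a family and push the classical proof through, but that argument is the entire content of the proposition and is missing.

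There are two further gaps in the stated generality ($X$ Hausdorff with $X^\omega$ Baire, not Polish). First, you need each fiber to be ``non-meager in itself,'' i.e.\ a Baire space; but a fiber is an infinite product of copies of $U$ and $W$, and infinite products of Baire spaces need not be Baire. Knowing $X^\omega$ is Baire gives you that $X$, $U$, $W$ are Baire, but not that $\prod_n F_n^z$ is (it is a non-dense, non-open $G_\delta$ in $X^\omega$). Second, without a countable base (or at least a countable $\pi$-base) for $X$ the fiberwise category quantifier exchange has no visible proof. Both issues evaporate when $X$ is Polish, which is all the paper needs for its main theorem, but the proposition as stated is more general. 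For contrast, the paper avoids all of this: it uses Laflamme's game-theoretic characterization (Player II wins the Laflamme game iff $\mathcal{I}$ is meager) and builds II's winning strategy directly from a decreasing sequence of dense open sets witnessing the comeagerness of $\mathcal{S}_\eta$, interleaving cylinder refinements with the cofinite sets chosen by Player I; no category-transfer along a non-product map is ever needed. If you want to salvage your route, you must either prove the fibered Kuratowski--Ulam statement for your specific $\pi$ (feasible for second countable $X$ with Baire fibers) or replace it by a direct Banach--Mazur/strategy-pullback argument, which will end up close to the paper's game-theoretic proof.
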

\begin{proof}
Fix $\eta \in X$ such that $\mathcal{S}_\eta$ is comeager, and let $U,V\subseteq X$ be two disjoint nonempty open sets such that $\eta \in U$. 
Since $X^\omega$ is Baire, there exists a decreasing sequence $(G_n)$ of dense open subsets
of $X^\omega$ such that $\bigcap_n G_n$ is dense and contained in $\mathcal{S}_\eta$. 

Now, consider the following game defined by Laflamme in \cite{MR1367134}: Players I and II choose alternately subsets $C_0,F_0,C_1,F_1,\ldots$ of $\omega$, where the sets $C_0\supseteq C_1\supseteq \ldots$, which are chosen by Player I, are cofinite and the sets $F_k\subseteq C_k$, which are chosen by Player II, are finite. 
Player II is declared to be the winner if and only if $\bigcup\nolimits_k F_k \in \mathcal{I}^+$. We may suppose without loss of generality that $F_k\cap C_{k+1}=\emptyset$ and $C_k=[c_k,\infty)$  for all $k \in \omega$ (hence, the sequence $(c_{k})$ corresponds to arbitrary (large enough) choices made by Player I). By \cite[Theorem 2.12]{MR1367134}, Player II has a winning strategy if and only if $\mathcal{I}$ is meager. 
The remaining part of the proof consists in showing that Player II has a winning strategy. 

We will define recursively, together with the description of the strategy of Player II, also a decreasing sequence of basic open sets 
$$
A_0\supseteq B_0 \supseteq A_1\supseteq B_1 \supseteq \cdots
$$
in $X^\omega$ (recall that a basic open set in $X^\omega$ is a cylinder of the type $D=\{\bm{x} \in X^\omega: x_0\in W_0, x_1 \in W_1,\ldots,x_n \in W_n\}$ for some open sets $W_0,\ldots,W_n \subseteq X$, and we set $m(D):=n$).  
Suppose that the sets $C_0,F_0,\ldots,C_{k-1},F_{k-1},C_k\subseteq \omega$ have been already chosen and that the open sets $A_0, B_0, \ldots, A_{k-1}, B_{k-1} \subseteq X^\omega$ have already been defined, for some $k \in \omega$, where we assume by convention that $B_{-1}:=X^\omega$ and $m(X^\omega):=-1$. 
Then we define the sets $A_k, B_k$, and $F_k$ as follows: 
\begin{enumerate}[label={\rm (\roman{*})}]
\item $A_k:=\{\bm{x} \in B_{k-1}: x_n \in V \text{ for all }n\text{ with }m(B_{k-1})<n<c_k\}$;
\item $B_k$ is a nonempty basic open set contained in $G_k \cap A_k$ (note that this is possible since $G_k$ is open dense and $A_k$ is nonempty open); in addition, for each $n \in [c_k,m(B_k)]$, let $W_n\subseteq X$ be the smallest nonempty open set such that if $\bm{x} \in B_k$ then $x_n \in W_n$ (equivalently, $W_n$ is the unique open subset of $X$ such that the projection of the cylinder $B_k$ at the $n$-th coordinate is precisely $x_n \in W_n$). Replacing each $W_n$ with the smaller open set $W_n \cap U$ if $W_n\cap U\neq \emptyset$, it is possible to assume without loss of generality that either $W_n\subseteq U$ or $W_n \cap U=\emptyset$ for all $n \in [c_k,m(B_k)]$. 
\item $F_k:=\{n \in [c_k, m(B_k)]: W_n\subseteq U\}$ (note that this is a finite set, possibly empty). 
\end{enumerate} 
We obtain by construction that there exists a sequence $\bm{x}=(x_n: n \in\omega) \in X^\omega$ such that $\bm{x} \in \bigcap\nolimits_k B_k \subseteq \bigcap\nolimits_k G_k\subseteq \mathcal{S}_\eta$. 
This implies that $\eta$ is an $\mathcal{I}$-cluster point of $\bm{x}$, hence $\{n \in \omega: x_n \in U\} \in \mathcal{I}^+$. 
At the same time, by the definitions above  
$$
\{n \in \omega: x_n \in U\} 
=\bigcup\nolimits_k \{n \in [c_k, m(B_k)]: W_n\subseteq U\}
=\bigcup\nolimits_k F_k.
$$
This proves that Player II has a winning strategy. Therefore $\mathcal{I}$ is meager.
\end{proof}

\medskip

Note that Theorem \ref{thm:misiktothmain} proves, in particular, the implication \ref{item:1mainmeager} $\implies$ \ref{item:3mainmeager} of Theorem \ref{thm:answeropenproblem}. However, we provide below a self-contained proof. 
\begin{proof}[Proof of Theorem \ref{thm:answeropenproblem}] 
First of all, it is routine to check that, if $\mathcal{I}$ is an ideal on $\omega$, then $\nu:=\bm{1}_{\mathcal{I}^+}$ is a diffuse submeasure, and that it satisfies condition \eqref{eq:misikcondition} if and only if: 
\begin{equation}\label{eq:misikcondition2}
\exists g \in \omega^\omega, 
\,\forall A\in \mathcal{I}^\star,\, \exists n_{A} \in \omega,\, \forall n \ge n_{A}:\,\,\,\, A\cap [n,n+g(n)] \neq \emptyset.
\end{equation} 
Moreover, by Talagrand's characterization \cite[Theorem 2.1]{Talagrand}, the meagerness of $\mathcal{I}$ is equivalent to the existence of 
    a sequence $(I_n: n \in \omega)$ of intervals of $\omega$ such that $\max I_n<\min I_{n+1}$ for all $n \in \omega$ and that $S\in \mathcal{I}^+$ whenever $I_k\subseteq S$ for infinitely many $k \in \omega$. 

\medskip

\ref{item:1mainmeager} $\implies$ \ref{item:2mainmeager}. Suppose that condition \eqref{eq:misikcondition} holds for $\bm{1}_{\mathcal{I}^+}$ or, equivalently, condition \eqref{eq:misikcondition2} is satisfied. Observe that the latter is equivalent to the existence of $g \in \omega^\omega$ such that if $S:=\omega\setminus A$ contains infinitely many $[n,n+g(n)]$, then $A\notin \mathcal{I}^\star$, i.e., $S\in \mathcal{I}^+$. At this point, define $I_0:=[0,g(0)]$ and $I_{n+1}:=[a_n, a_n+g(a_n)]$ where $a_n:=1+\max I_n$ for all $n \in \omega$. It follows that $S \in \mathcal{I}^+$ whenever $S$ contains infinitely many intervals $I_n$. Hence $\mathcal{I}$ is meager by Talagrand's characterization.

\medskip

\ref{item:2mainmeager} $\implies$ \ref{item:1mainmeager}. Pick a sequence of intervals $(I_n)$ as in Talangrand's characterization, and define $g(n):=\max I_k$ where $k$ is the smallest nonnegative integer with $\min I_k\ge n$. Now, pick $A\subseteq \omega$ and suppose that there exists infinitely many $n \in \omega$ such that $A\cap [n,n+g(n)]=\emptyset$. Then $\omega\setminus A$ contains inifinitely many $I_n$, so that it belongs to $\mathcal{I}^+$. Therefore $A\notin \mathcal{I}^\star$, and condition \eqref{eq:misikcondition2} holds.

\bigskip

\ref{item:2mainmeager} $\implies$ \ref{item:3mainmeager}. Pick a sequence of intervals $(I_n)$ as in Talangrand's characterization. 
    Let $A:=\{a_n: n \in\omega\}$ be a countable dense subset of $X$ and note that a sequence $\bm{x}\in X^\omega$ is $\nu$-maldistributed if and only if $\Gamma_{\bm{x}}(\mathcal{I})=X$. Taking into account that the set of $\mathcal{I}$-cluster points $\Gamma_{\bm{x}}(\mathcal{I})$ is closed, see e.g. \cite[Lemma 3.1(iv)]{MR3920799}, then $\bm{x}$ is $\nu$-maldistributed if and only if $A\subseteq \Gamma_{\bm{x}}(\mathcal{I})$. Since $A$ is countable and the family of meager subsets of $X$ is a $\sigma$-ideal, it is enough to show that
    $$
    \forall \eta \in X, \quad \mathcal{S}_\eta:=\{\bm{x}\in X^\omega: \eta \in \Gamma_{\bm{x}}(\mathcal{I})\}
    $$
    is comeager. 
    To this aim, 
    fix $\eta \in X$. Consider the Banach–Mazur game defined as follows: Players I and II choose alternatively nonempty open subsets of $X^\omega$ as a nonincreasing chain
    $$
    U_0\supseteq V_0 \supseteq U_1 \supseteq V_1 \supseteq \cdots,
    $$
    where Player I chooses the sets $U_0, U_1,\ldots$; Player II has a winning strategy if $\bigcap_n V_n \subseteq \mathcal{S}_\eta$. It follows by \cite[Theorem 8.33]{MR1321597} that Player II has a winning strategy if and only if $\mathcal{S}_\eta$ is comeager. In fact, let $d$ denote the metric on $X$ and suppose that the nonempty open set $U_n$ has been chosen by Player I. Then $U_n$ contains a nonempty basic open set $B_n$ of $X^\omega$ with support in a subset of the coordinates $\{0,1,\ldots,\kappa_n\}$. Pick $j_n\in\omega$ such that $\min I_{j_n}>\kappa_n$. Then it is enough that Player II chooses 
    $$
    V_n:=\{\bm{x} \in B_n: \forall i \in I_{j_n}, \quad d(x_i,\eta)<2^{-n}\}.
    $$
    This is indeed a winning strategy for Player II: if $\bm{x} \in \bigcap_n V_n$ then for every $\varepsilon>0$ we have that $\{n \in \omega: d(x_n,\eta)<\varepsilon\}$ contains infinitely many intervals $I_k$, hence it is an $\mathcal{I}$-positive set. Therefore $\bm{x} \in \mathcal{S}_\eta$. 
    It follows that $\Sigma_\nu(X)$ is comeager.

    \medskip

    \ref{item:3mainmeager} $\implies$ \ref{item:2mainmeager}. Suppose that $\Sigma_{\nu}(X)$ is comeager and fix $\eta \in X$. Since $X$ is a complete metric space, then $X^\omega$ is Baire and the set $\mathcal{S}_\eta$ defined in \eqref{eq:definitionSeta} is comeager as it is a superset of $\Sigma_{\nu}(X)$. Therefore $\mathcal{I}$ is meager by Proposition \ref{prop:converse}. 
\end{proof}

\begin{rmk}
As it follows from the proof above, the implication \ref{item:2mainmeager} $\implies$ \ref{item:3mainmeager} holds for all separable metric spaces $X$. 
\end{rmk}

\medskip

We conclude with the proof of Proposition \ref{prop:lscsm}. 
\begin{proof}
[Proof of Proposition \ref{prop:lscsm}]
Fix $\alpha \in (0,1)$ and for each $n \in \omega$ define $g_\alpha(n):=\min\{k \in \omega: \varphi([n,n+k])\ge 1-\alpha/4\}$. Now, pick $A\subseteq \omega$ such that $\|\omega\setminus A\|_\varphi \le 1-\alpha$ and fix $n_A \in \omega$ such that $\varphi((\omega\setminus A) \cap [n,\infty)) \le 1-\alpha/2$ for all $n\ge n_A$. Since $\varphi$ is a submeasure, it follows that, for all integers $n\ge n_A$, we get
\begin{displaymath}
    \begin{split}
        \varphi(A \cap [n,n+g_\alpha(n)])&\ge \varphi([n,n+g_\alpha(n)])-\varphi((\omega\setminus A) \cap [n,n+g_\alpha(n)])\\
        &\ge \varphi([n,n+g_\alpha(n)])-\varphi((\omega\setminus A) \cap [n,\infty)) \ge \nicefrac{\alpha}{4},
    \end{split}
\end{displaymath}
hence $A \cap [n,n+g_\alpha(n)]\neq \emptyset$. The second part follows by Theorem \ref{thm:misiktothmain}. 
\end{proof}

\section*{Acknowledgments}

The author is thankful to two anynomous reviewers for a really careful reading of the manuscript which allowed to fix a gap in a previous version of the manuscript. 
The author is also grateful to Marek Balcerzak (Lodz University of Technology, PL) for the suggestion of \cite[Open Problem 5.1]{MR4781067}. 

\bibliographystyle{amsplain}
\providecommand{\href}[2]{#2}

\end{document}